\newtheorem{thm}{Theorem}[section]
\newtheorem{theorem}[thm]{Theorem}
\newtheorem{corollary}[thm]{Corollary}
\newtheorem{problem}[thm]{Problem}
\newtheorem{notation}[thm]{Notation}
\newtheorem{lemma}[thm]{Lemma}
\newtheorem{proposition}[thm]{Proposition}
\newtheorem{definition}[thm]{Definition}
\theoremstyle{remark}
\newcommand{\norm}[1]{\|#1\|}
\newcommand{\RR}{\mathbb R}
\newcommand{\CC}{\mathbb C}
\newcommand{\cH}{\mathcal H}
\title{Fusion Frames and the Restricted Isometry Property}
\author[B.G. Bodmann]{Bernhard G. Bodmann}
\address{Department of Mathematics, University of Houston, Houston, TX 77204-3008}
\email{bgb@math.uh.edu}
\author[J. Cahill]{Jameson Cahill}
\address{Department of Mathematics, University
of Missouri, Columbia, MO 65211-4100}
\email{jameson.cahill@gmail.com}
\author[P.G. Casazza]{Peter G. Casazza} 
\address{Department of Mathematics, University
of Missouri, Columbia, MO 65211-4100}
\email{casazzap@missouri.edu}
\thanks{B. G. Bodmann was supported by NSF DMS 1109545 and 
by AFOSR FA9550-11-1-0245. J. Cahill and P. G. Casazza were supported by NSF DMS 1008183,
DTRA/NSF: ATD 1042701, AFOSR FA9550-11-1-0245}
\begin{document}

\maketitle

\begin{abstract}
We will show that tight frames satisfying the restricted isometry property 
give rise to nearly tight fusion frames which are nearly orthogonal and hence
are nearly equi-isoclinic.  We will also show how to replace parts of the
RIP frame with orthonormal sets while maintaining the RIP property.
\end{abstract}

\section{Introduction}

Fusion frames are a generalization of frames.
Fusion frames were introduced in \cite{CK04} under the name {\it frames of subspaces}
and quickly found application to problems in sensor networks, distributed processing and
more \cite{CF09,CK07b,CKL08,KPCL08}.  For a comprehensive view of the papers on fusion frames we refer the reader to
{\it www.fusionframes.org}.  While frames decompose a vector into scalar coefficients,
fusion frames decompose a vector into vector coefficients which can be locally processed
and later combined.  Fusion frames are designed to handle modern techniques for
information processing which today emphasizes distributed processing.  They allow
data processing to become a two step process where we first perform local processing
at individual nodes in the system and this is followed by integration of these results at
a central processor.  This has application to {\it packet-based network communications}, sensor networks,
radar imaging and more \cite{CCHKP11}.  This {\it hierarchical processing} helps to
design systems which are robust against noise, data loss, and erasures \cite{B,CK07b,
CKL08,KPCL08,KPC08}.  Much of the work on fusion frames has surrounded the
construction of fusion frames with specialized properties 
\cite{BCPST10,CFHWZ11,CFMWZ11,MRS}.

Our goal here is to use tools from compressed sensing,
namely matrices with the restricted isometry property, to
construct fusion frames with very strong properties.
Compressed sensing is a very hot topic today because of its broad application
to problems in sparse signal recovery.  There is so much literature in this area it
is not possible to adequately represent it here so we refer the reader to two recent
tutorials on the subject and their references \cite{FR,R}.  A fundamental tool in
this area is the {\it restricted isometry property (RIP)} (See section \ref{S1} for definitions).
This is a very powerful property for a family of vectors $\{\varphi_i\}_{i=1}^M$ in
$\cH_N$ which yields that subsets of a fixed size are {\it nearly orthonormal}.
As such, it is quite difficult to produce such families of vectors of the needed
sizes and they are constructed by probabilistic methods.
It is a fundamental open problem in the area to give a concrete construction
of RIP vectors of the appropriate sizes.  

In this paper, we will use tight frames of RIP matrices to construct fusion frames
with some very strong properties.  First we will show that we can construct
nearly tight fusion frames which still have the RIP property.  Next, we will construct
fusion frames with additional strong properties such as being {\it nearly equi-isoclinic}.
Finally, we will see how to replace subsets of our RIP family with orthonormal sequences
while tracking the change in the RIP constants.

\section{Frames and Fusion Frames}

Fusion frames are a generalization of frames.

\begin{definition}
A family of vectors $\{\varphi_i\}_{i\in I}$ in a Hilbert space $\cH$ is a {\it frame} for
$\cH$ if there are constants $0<A\le B < \infty$ so that for all $\varphi \in \cH$ we have
\[ A\|\varphi\|^2 \le \sum_{i\in I}|\langle \varphi,\varphi_i\rangle|^2 \le B \|\varphi\|^2.\]
\end{definition}
The numbers $A,B$ are {\it lower} (respectively, {\it upper}) frame bounds for the
frame.  If $A=B$ it is an $A${\it -tight frame} and if $A=B=1$, it is a {\it Parseval frame}.
If $\|\varphi_i\|=c$ for all $i\in I$ this is an {\it equal norm frame} and if $c=1$ it is
a {\it unit norm frame}.   The {\it analysis operator} of the frame is $T:\cH_N
\rightarrow \ell_2(M)$ given by
\[ T(\varphi) = \sum_{i=1}^M \langle \varphi,\varphi_i\rangle e_i,\]
where $\{e_i\}_{i=1}^M$ is the coordinate orthonormal basis of $\ell_2(M)$.
The {\it synthesis operator} is $T^*$ and is given by
\[ T^*\left ( \sum_{i=1}^Ma_ie_i\right ) = \sum_{i=1}^M a_i\varphi_i.\]
The {\it frame operator} is the positive self-adjoint invertible operator
$S=T^*T$ and satisfies
\[ S(\varphi) = \sum_{i=1}^M\langle \varphi,\varphi_i\rangle \varphi_i.\]
Reconstruction is given by
\[ \varphi = \sum_{i=1}^M \langle \varphi,\varphi_i\rangle S^{-1}\varphi_i=
\sum_{i=1}^M \langle \varphi,S^{-1/2}\varphi_i\rangle S^{-1/2}\varphi_i.\]
In particular, $\{S^{-1/2}\varphi_i\}_{i=1}^M$ is a Parseval frame for $\cH_N$.

Frame theory has application to a wide variety of problems in signal processing
and much more (see the monographs \cite{Gr, Ch} for a comprehensive view). 
Fusion frames are a generalization of frames and were introduced in
\cite{CK04}.  While frames decompose a signal into scalar coefficients,
fusion frames decompose signals into vector coefficients which can then be
locally processed and later combined to draw global conclusions.

\begin{definition}Given a Hilbert space $\cH$ and a family of closed subspaces $\{W_i\}_{i \in I}$ with
associated positive weights $v_i$, $i \in I$, a collection
of weighted subspaces $\{(\mathcal{W}_i,v_i)\}_{i \in I}$ 
is a {\em fusion frame} for $\cH$ if
there exist
constants $0 < A \le B < \infty$ satisfying
\[
A\|\varphi\|^2 \le \sum_{i \in I} v_i^2 \|P_i \varphi\|^2 \le B\|\varphi\|^2 \qquad \mbox{for any }\varphi \in \cH,
\]
where $P_i$ is the orthogonal projection onto $\mathcal{W}_i$. 
\end{definition}

The constants $A$ and $B$ are called
{\em fusion frame bounds}. A fusion frame is called {\em tight} if $A$ and $B$ can be chosen
to be equal, {\em Parseval} if $A=B=1$, and {\it orthonormal} if
\[ \cH = \oplus_{i\in I} W_i.\]  For $0<\epsilon<1$, the fusion frame is $\epsilon${\it -nearly tight}
if there is a constant $C$ so that $A=\frac{1}{1+\epsilon}C,\ B=(1+\epsilon)C$.
The fusion frame is {\it equi-dimensional} if all its subspaces $W_i$ have the same dimension.
\begin{notation}If $\{W_i\}_{i\in I}$ are subspaces of $\cH_N$, we define the space
\[\left ( \sum_{i\in I}\oplus W_i\right )_{\ell_2} = \{\{\psi_i\}_{i\in I}|\ \psi_i \in W_i\mbox{ and }
\sum_{i\in I}\|\psi_i\|^2 < \infty\},\]
with inner product given by
\[ \left \langle \{\psi_i\}_{i\in I},\{\tilde{\psi}_i\}_{i\in I}\right \rangle = \sum_{i\in I}\langle
\psi_i,\tilde{\psi}_i\rangle.\]
\end{notation}
The {\it analysis operator} of the fusion frame is the operator 
\[T:\cH_N \rightarrow \left ( \sum_{i\in I}\oplus W_i\right )_{\ell_2},\]
given by
\[ T(\varphi) = \{v_iP_i\varphi\}_{i\in I}.\]
The {\it synthesis operator} of the fusion frame is $T^*$ and is given by
\[ T^*\left ( \{\psi_i\}_{i\in I}\right ) = \sum_{i\in I} v_i\psi_i.\]

The fusion frame operator is the positive, self-adjoint and invertible operator $S_W:\cH \rightarrow \cH$ given by
\[ S_W\varphi = \sum_{i\in I}v_i^2P_i\varphi
,\mbox{ for all } \varphi \in \cH.\]  It is known \cite{CKL08} that $\{W_i,v_i\}_{i\in I}$ is a fusion frame with fusion frame bounds $A,B$ if and only if $AI \le S_W \le BI$.
Any signal $\varphi \in \cH$ can be reconstructed \cite{CKL08} from its fusion frame
measurements $\{v_i P_i \varphi\}_{i \in I}$ by performing
\[
\varphi = \sum_{i \in I} v_i S^{-1} (v_i P_i \varphi).
\]

A frame $\{\varphi_i\}_{i\in I}$ can be thought of as a fusion frame of one dimensional subspaces 
where $W_i = span\ \{\varphi_i\}$ for all $i\in I$.  The fusion frame is then $\{W_i,\|\varphi_i\|\}$. 
 A difference
between frames and fusion frames is that for frames, an input signal $\varphi \in \cH$ is represented
by a collection of scalar coefficients $\{\langle \varphi,\varphi_i\rangle \}_{i\in I}$ 
that measure the projection of
the signal onto each frame vector $\varphi_i$, while for fusion frames, an input signal $\varphi \in \cH$
is represented by a collection of {\it vector coefficients} $\{\Pi_{W_i}(\varphi)\}_{i\in I}$ corresponding to
projections onto each subspace $W_i$.

Much work has been put into the construction
of fusion frames with specified properties \cite{BCPST10,CFHWZ11,CFMWZ11}.  We also
have a generalization of fusion frames using non-orthogonal projections \cite{CCL10}.

There is an important connection between fusion frame bounds and bounds from frames taken from each
of the fusion frame's subspaces \cite{CK}.

\begin{theorem}
For each $i\in I$, let $v_i>0$ and $W_i$ be a closed subspace of $\cH$, and let
$\{\varphi_{ij}\}_{j\in J_i}$ be a frame for $W_i$ with frame bounds $A_i,B_i$.  Assume that
$0< A= \inf_{i\in I}A_i \le \sup_{i\in I}B_i=B<\infty$.  Then the following conditions hold:
\begin{enumerate}
	\item  $\{W_i,v_i\}_{i\in I}$ is a fusion frame for $\cH$.
	\item $\{v_i\varphi_{ij}\}_{i\in I,j\in J_i}$ is a frame for $\cH$.
\end{enumerate}
In particular, if $\{W_i,v_i\}_{j\in J_i}\}_{i\in I}$ is a fusion frame for $\cH$ with 
fusion frame bounds $C,D$, then $\{v_i\varphi_{ij}\}_{i\in I,j\in J_i}$ is a frame for $\cH$
with frame bounds $AC,BD$.  Also, if $\{v_i\varphi_{ij}\}_{i\in I,j\in J_i}$ is a frame for $\cH$ with frame
bounds $C,D$, then $\{W_i,v_i,\}_{j\in J_i}\}_{i\in I}$ is a fusion frame for $\cH$ with fusion
frame bounds $\frac{C}{B},\frac{D}{A}$.
\end{theorem}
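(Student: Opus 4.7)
The plan is to exploit the single key observation that each $\varphi_{ij}$ lies in $W_i$, so $\langle \varphi, \varphi_{ij}\rangle = \langle P_i\varphi, \varphi_{ij}\rangle$ for every $\varphi \in \cH$. This lets us translate the local frame inequalities on $W_i$ directly into inequalities for the global system $\{v_i\varphi_{ij}\}$.

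First I would fix $\varphi \in \cH$ and apply the frame inequality for $\{\varphi_{ij}\}_{j \in J_i}$ to the vector $P_i\varphi \in W_i$, obtaining
\[
A_i \|P_i\varphi\|^2 \le \sum_{j\in J_i} |\langle P_i\varphi, \varphi_{ij}\rangle|^2 \le B_i \|P_i\varphi\|^2.
\]
Using $\varphi_{ij} \in W_i$ to replace $P_i\varphi$ by $\varphi$ in the inner product, and absorbing a factor $v_i^2$ into the frame vectors, this becomes
\[
A_i v_i^2 \|P_i\varphi\|^2 \le \sum_{j\in J_i} |\langle \varphi, v_i\varphi_{ij}\rangle|^2 \le B_i v_i^2 \|P_i\varphi\|^2.
\]
Next I would sum over $i \in I$ and apply the uniform bounds $A \le A_i$ and $B_i \le B$ on the outer sides to get
\[
A \sum_{i \in I} v_i^2 \|P_i\varphi\|^2 \le \sum_{i \in I, j\in J_i} |\langle \varphi, v_i\varphi_{ij}\rangle|^2 \le B \sum_{i \in I} v_i^2 \|P_i\varphi\|^2.
\]
This double inequality is the engine of the whole argument.

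Now both claims follow almost formally. If $\{W_i,v_i\}_{i\in I}$ is a fusion frame with bounds $C,D$, then substituting the bounds $C\|\varphi\|^2 \le \sum_i v_i^2\|P_i\varphi\|^2 \le D\|\varphi\|^2$ into the inequality above gives frame bounds $AC$ and $BD$ for $\{v_i\varphi_{ij}\}$. Conversely, if $\{v_i\varphi_{ij}\}$ is a frame with bounds $C,D$, dividing the displayed inequality by $B$ on the left side and by $A$ on the right side yields
\[
\frac{C}{B}\|\varphi\|^2 \le \sum_{i\in I} v_i^2 \|P_i\varphi\|^2 \le \frac{D}{A}\|\varphi\|^2,
\]
giving the claimed fusion frame bounds $C/B$ and $D/A$. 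The qualitative statements (1) and (2) are then immediate corollaries of these quantitative relations, once one observes that positive lower bounds on one side translate to positive lower bounds on the other because $0 < A \le B < \infty$.

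The only conceptual point to be careful about is ensuring the double sum $\sum_{i,j}|\langle\varphi,v_i\varphi_{ij}\rangle|^2$ is well-defined (i.e.\ may be reordered), but since every term is nonnegative this is automatic by Tonelli. There is no real obstacle — the main work is the bookkeeping of which constants attach to which side, and the substitution $\langle\varphi,\varphi_{ij}\rangle = \langle P_i\varphi,\varphi_{ij}\rangle$, which is the geometric content of the argument.
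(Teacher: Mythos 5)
Your proof is correct, and it is the standard argument: the paper itself states this theorem without proof (it is quoted from the Casazza--Kutyniok reference), and your identity $\langle\varphi,\varphi_{ij}\rangle=\langle P_i\varphi,\varphi_{ij}\rangle$ followed by summing the local frame inequalities is exactly the argument given there. One small remark: as written in the paper, items (1) and (2) cannot both hold unconditionally (the subspaces need not even span $\cH$); they are meant to be read as equivalent conditions, and your closing observation that positive bounds on one side transfer to the other is precisely what establishes that equivalence.
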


\begin{corollary}
For each $i\in I$, let $v_i>0$ and $W_i$ be a closed subspace of $\cH$.  The following
are equivalent:
\begin{enumerate}
	\item $\{W_i,v_i\}_{i\in I}$ is a fusion frame for $\cH$ with fusion frame bounds $A,B$.
	\item For every orthonormal basis $\{e_{ij}\}_{j\in K_i}$ for $W_i$, the family $\{v_ie_{ij}\}_{i\in I,j\in K_i}$ is a frame for $\cH$ with frame bounds $A,B$.
	\item For every Parseval frame $\{\varphi_{ij}\}_{i\in I,j\in J_i}$ for 
	$W_i$, the family $\{v_i\varphi_{ij}\}_{i\in I,j\in J_i}$ is a frame for $\cH$ with frame bounds $A,B$.
\end{enumerate} 
\end{corollary}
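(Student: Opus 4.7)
The plan is to reduce all three equivalences to a single Parseval-type identity applied within each subspace $W_i$. The cleanest route is to observe that an orthonormal basis for $W_i$ is in particular a Parseval frame for $W_i$, so the implication (3) $\Rightarrow$ (2) is automatic, leaving me to prove (1) $\iff$ (3) and (2) $\Rightarrow$ (1). Alternatively, the entire statement can be extracted as the special case $A_i=B_i=1$ of the preceding theorem, since both Parseval frames and orthonormal bases have these frame bounds; in that case the theorem's conclusions $AC = C$ and $BD = D$ in one direction, and $C/B=C$, $D/A=D$ in the other, give exactly the matching bounds claimed in the corollary.

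The key computation is the following. Suppose $\{\varphi_{ij}\}_{j \in J_i}$ is a Parseval frame for $W_i$. Since every $\varphi_{ij}$ lies in $W_i$, we have $\langle \varphi, \varphi_{ij}\rangle = \langle P_i\varphi, \varphi_{ij}\rangle$ for any $\varphi \in \cH$, and applying the Parseval identity to the vector $P_i\varphi \in W_i$ gives
\[
\sum_{j \in J_i} |\langle \varphi, \varphi_{ij}\rangle|^2 \;=\; \sum_{j \in J_i} |\langle P_i\varphi, \varphi_{ij}\rangle|^2 \;=\; \|P_i\varphi\|^2.
\]
Scaling by $v_i^2$ and summing over $i \in I$ yields the identity
\[
\sum_{i \in I,\, j \in J_i} |\langle \varphi, v_i\varphi_{ij}\rangle|^2 \;=\; \sum_{i \in I} v_i^2 \|P_i\varphi\|^2,
\]
so the frame inequality for $\{v_i\varphi_{ij}\}_{i,j}$ and the fusion frame inequality for $\{W_i, v_i\}_i$ are literally the same two-sided inequality in $\|\varphi\|^2$. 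Hence they hold with identical bounds, proving (1) $\iff$ (3).

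For the remaining implication (2) $\Rightarrow$ (1), I would simply invoke the fact that every closed subspace $W_i$ admits an orthonormal basis; picking one such basis for each $i$ and running the identity above (now with the Parseval property specializing to Parseval for an ONB) transfers the frame bounds $A,B$ of $\{v_ie_{ij}\}$ to the same fusion frame bounds for $\{W_i, v_i\}$. There is no real obstacle in this argument; the only thing to be slightly careful about is that $\varphi_{ij} \in W_i$ is what lets the inner product with $\varphi$ collapse to an inner product with $P_i\varphi$, enabling the use of Parseval's identity inside $W_i$ even though $\varphi$ itself need not lie in $W_i$.
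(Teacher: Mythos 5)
Your proof is correct. The paper itself gives no written proof of this corollary; it is presented as the immediate specialization $A_i=B_i=1$ of the preceding theorem, which is exactly the ``alternative'' route you mention (with the bounds $AC=C$, $BD=D$ and $C/B=C$, $D/A=D$ matching as you note). Your primary argument is a genuinely different and somewhat stronger one: rather than passing through the two-sided bound transfer of the theorem, you observe that for any Parseval frame $\{\varphi_{ij}\}_{j\in J_i}$ of $W_i$ one has the exact identity $\sum_{j}|\langle \varphi,\varphi_{ij}\rangle|^2=\|P_i\varphi\|^2$ (via $\langle\varphi,\varphi_{ij}\rangle=\langle P_i\varphi,\varphi_{ij}\rangle$), so that $\sum_{i,j}|\langle\varphi,v_i\varphi_{ij}\rangle|^2=\sum_i v_i^2\|P_i\varphi\|^2$ and the frame inequality and fusion frame inequality are the \emph{same} inequality, not merely inequalities with comparable constants. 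This buys you the equivalence with identical bounds in one stroke and makes transparent why no constants are lost in this Parseval case, whereas the citation of the theorem buys brevity but obscures the fact that the two quantities actually coincide. Your handling of (3) $\Rightarrow$ (2) (an orthonormal basis is a Parseval frame) and (2) $\Rightarrow$ (1) (every closed subspace admits an orthonormal basis, then run the identity) is also correct, so the cycle of implications closes.
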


\begin{corollary}
For each $i\in I$, let $v_i>0$ and $W_i$ be a closed subspace of $\cH$.  The following
are equivalent:
\begin{enumerate}
	\item $\{W_i,v_i\}_{i\in I}$ is a Parseval fusion frame for $\cH$.
	\item  For every orthonormal basis $\{e_{ij}\}_{j\in K_i}$ for $W_i$, the family $\{v_ie_{ij}\}_{i\in I,j\in K_i}$ is a Parseval frame for $\cH$.
	\item For every Parseval frame $\{\varphi_{ij}\}_{i\in I,j\in J_i}$ for $W_i$, the family 
	$\{v_i\varphi_{ij}\}_{i\in I,j\in J_i}$ is a Parseval frame for $\cH$.
\end{enumerate} 
\end{corollary}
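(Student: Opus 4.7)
The plan is to derive this corollary as a direct specialization of the preceding one. The preceding corollary asserts the equivalence of $\{W_i,v_i\}_{i\in I}$ being a fusion frame with bounds $A,B$ and each of the companion families being a frame with bounds $A,B$; the present statement is precisely the case $A=B=1$.

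First I would record the essentially tautological observation that an orthonormal basis $\{e_{ij}\}_{j\in K_i}$ of $W_i$ is a Parseval frame for $W_i$, and that a Parseval frame by definition has frame bounds $1,1$. Thus the per-subspace bounds $A_i$ and $B_i$ appearing in the hypothesis of the theorem are both equal to $1$ for every $i\in I$, so that $\inf_i A_i = \sup_i B_i = 1$ trivially.

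Next, for the direction $(1)\Rightarrow(2)$ and $(1)\Rightarrow(3)$, I would assume $\{W_i,v_i\}_{i\in I}$ is a Parseval fusion frame, i.e.\ has fusion frame bounds $C=D=1$, and invoke the preceding corollary: the resulting frame $\{v_i e_{ij}\}$ (respectively $\{v_i\varphi_{ij}\}$) has frame bounds $1,1$, hence is Parseval. For the converse implications $(2)\Rightarrow(1)$ and $(3)\Rightarrow(1)$, I would start from the assumption that $\{v_i e_{ij}\}$ (respectively $\{v_i\varphi_{ij}\}$) is a Parseval frame for $\cH$ with bounds $C=D=1$, and again apply the preceding corollary to conclude that the fusion frame bounds are $C/B = D/A = 1$, making the fusion frame Parseval.

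There is no real obstacle in this argument; the only thing to be careful about is making sure that the quantifiers match, namely that statements (2) and (3) assert the frame property holds \emph{for every} choice of orthonormal basis or Parseval frame in each $W_i$, which is exactly the form in which the preceding corollary delivers the conclusion. No new estimate is required.
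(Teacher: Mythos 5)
Your proposal is correct and matches the paper's (implicit) argument: the paper gives no separate proof of this corollary, treating it exactly as you do---the special case $A=B=1$ of the preceding corollary, using the fact that orthonormal bases and Parseval frames for each $W_i$ have local frame bounds $A_i=B_i=1$. No issues.
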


\section{$\epsilon$-Riesz Sequences}

For our work we will need some information concerning $\epsilon$-Riesz sequences.

\begin{definition}  
A family of vectors $\{\varphi_i\}_{i=1}^N$ in $\cH_N$ is a Riesz basis with 
lower (resp. upper) Riesz bounds $0<A\le B< \infty$ if for all scalars $\{a_i\}_{i=1}^N$
we have
\[ A\sum_{i=1}^N|a_i|^2 \le \|\sum_{i=1}^Na_i\varphi_i\|^2 \le B \sum_{i=1}^N|a_i|^2.\]
This family of vectors is an $\epsilon$-Riesz basis for $\cH_N$
if for all scalars $\{a_i\}_{i=1}^N$ we have
\[ \frac{1}{1+\epsilon}\sum_{i=1}^N|a_i|^2 \le \|\sum_{i=1}^Na_i\varphi_i\|^2 \le
(1+\epsilon)\sum_{i=1}^N|a_i|^2.\]

The vectors are an $\epsilon$-Riesz sequence if they are an $\epsilon$-Riesz basis
for their span.
\end{definition}

As one can see, $\epsilon$-Riesz sequences are nearly orthonormal. %%%
The next few lemmas will formalize this statement.  First we recall that for a linearly
independent set of vectors $\{\varphi_i\}_{i=1}^N$ in $\cH_N$, the frame bounds of
this family equal the Riesz bounds.   It follows that if $S$ is the frame operator for
$\{\varphi_i\}_{i=1}^N$ then $\{S^{-1/2}\varphi_i\}_{i=1}^N$ is an orthonormal basis
for $\cH_N$.

\begin{proposition}\label{p1}
Let $\{\varphi_i\}_{i=1}^M$ be a family of unit norm vectors which is a $\epsilon$-Riesz
sequence.  Then for every partition $\{I_j\}_{j=1}^r$ of
$\{1,2,\ldots,M\}$  we have for all scalars $\{a_i\}_{i=1}^M$
\[ \frac{1}{(1+\epsilon)}\sum_{j=1}^r\|\sum_{i\in I_j}a_i\varphi_i\|^2 \le
 \sum_{i=1}^M |a_i|^2 \le (1+\epsilon)\sum_{j=1}^r\|\sum_{i\in I_j}
a_i\varphi_i\|^2.\]
Hence,
\[ \frac{1}{(1+\epsilon)^2}\sum_{j=1}^r\|\sum_{i\in I_j}a_i\varphi_i\|^2 \le
\|\sum_{i=1}^Ma_i\varphi_i\|^2 \le (1+\epsilon)^2 \sum_{j=1}^r\|\sum_{i\in I_j}a_i\varphi_i\|^2 .\]
\end{proposition}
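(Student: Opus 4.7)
The plan is to reduce both inequalities to the defining property of an $\epsilon$-Riesz sequence applied separately to each block of the partition, and then to chain with the defining property applied to the whole family.

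First I would record the subfamily fact: if $\{\varphi_i\}_{i=1}^M$ is an $\epsilon$-Riesz sequence and $I \subseteq \{1,\dots,M\}$ is any subset, then for scalars $\{a_i\}_{i\in I}$, extending by zero outside $I$ inside the defining inequality immediately yields
\[ \frac{1}{1+\epsilon}\sum_{i\in I}|a_i|^2 \le \Bigl\|\sum_{i\in I}a_i\varphi_i\Bigr\|^2 \le (1+\epsilon)\sum_{i\in I}|a_i|^2. \]
In other words, restricting the coefficient support to any subset preserves the Riesz constants. Applying this to each block $I_j$ and summing over $j=1,\dots,r$, the partition property collapses $\sum_j \sum_{i\in I_j}|a_i|^2$ to $\sum_{i=1}^M|a_i|^2$, producing
\[ \frac{1}{1+\epsilon}\sum_{i=1}^M|a_i|^2 \le \sum_{j=1}^r\Bigl\|\sum_{i\in I_j}a_i\varphi_i\Bigr\|^2 \le (1+\epsilon)\sum_{i=1}^M|a_i|^2, \]
which, upon inverting the sandwich, is exactly the first conclusion of the proposition.

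For the second conclusion I would then chain with the original $\epsilon$-Riesz inequality
\[ \frac{1}{1+\epsilon}\sum_{i=1}^M|a_i|^2 \le \Bigl\|\sum_{i=1}^M a_i\varphi_i\Bigr\|^2 \le (1+\epsilon)\sum_{i=1}^M|a_i|^2, \]
so that the factor of $(1+\epsilon)$ from the first step combines with the matching factor from this step to produce the $(1+\epsilon)^2$ appearing in both the upper and lower bounds of the stated conclusion.

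There is no real obstacle: the unit-norm hypothesis is never used, and the whole argument is a two-line manipulation of the defining inequality together with the observation that the partition structure makes the coefficient sums telescope. The only detail requiring care is aligning the two sandwich inequalities so the $(1+\epsilon)$ factors multiply in the correct direction on each side.
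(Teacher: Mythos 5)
Your proposal is correct and is essentially the paper's own argument: apply the $\epsilon$-Riesz inequality blockwise (using that zeroing coefficients outside a block preserves the constants), sum over the partition, and chain with the global inequality to get the $(1+\epsilon)^2$ bounds. The observation that the unit-norm hypothesis is not needed is also accurate.
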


\begin{proof}
We compute
\begin{eqnarray*}
\frac{1}{(1+\epsilon)}\sum_{j=1}^r\|\sum_{i\in I_j}a_i\varphi_i\|^2 &\le& 
\frac{1}{(1+\epsilon)}\sum_{j=1}^r (1+\epsilon)\sum_{i\in I_j}|a_i|^2\\
&=& \sum_{i\in \cup_{j=1}^r I_j}|a_i|^2\\
&=& \sum_{j=1}^r\sum_{i\in I_j}|a_i|^2\\
&\le&\sum_{j=1}^r (1+\epsilon)\|\sum_{i\in I_j}a_i\varphi_i\|^2\\
&=& (1+\epsilon)\sum_{j=1}^r\|\sum_{i\in I_j}a_i\varphi_i\|^2.
\end{eqnarray*}
For the hence, we combine the first part of the proposition with the fact that
\[ \frac{1}{1+\epsilon}\sum_{i=1}^M|a_i|^2 \le \|\sum_{i=1}^Ma_i\varphi_i\|^2
\le (1+\epsilon)\sum_{i=1}^M|a_i|^2.\]
\end{proof}

\begin{lemma}\label{l5}
If $\{\varphi_i\}_{i=1}^N$ is an $\epsilon$-Riesz basis for $\cH_N$ and let
$S$ be the frame operator.  Then
\[ \frac{1}{1+\epsilon}I \le S \le (1+\epsilon)I.\]
Hence,
\[ \frac{1}{1+\epsilon}I \le S^{-1} \le (1+\epsilon)I.\]
In general, if $0<a$ then
\[ \frac{1}{(1+\epsilon)^a}I\le S^a \le (1+\epsilon)^aI.\]
Hence, if $a>0$ then
\[ \frac{1}{(1+\epsilon)^a}I \le S^{-a} \le (1+\epsilon)^aI.\]
\end{lemma}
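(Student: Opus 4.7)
The plan is to extract the frame bounds of $\{\varphi_i\}_{i=1}^N$ directly from the $\epsilon$-Riesz basis condition, rewrite them as operator inequalities for the frame operator $S$, and then promote those inequalities to $S^a$ and $S^{-a}$ via the spectral theorem for positive self-adjoint operators.

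First I would invoke the remark stated just before the lemma: for a linearly independent family in $\cH_N$, the Riesz bounds coincide with the frame bounds. Since $\{\varphi_i\}_{i=1}^N$ is an $\epsilon$-Riesz basis, its Riesz bounds are $1/(1+\epsilon)$ and $1+\epsilon$, so these are also frame bounds. By the standard equivalence between frame bounds and operator inequalities on the frame operator (namely $A\|\varphi\|^2 \le \langle S\varphi,\varphi\rangle \le B\|\varphi\|^2$ iff $AI \le S \le BI$), this immediately gives
\[
\frac{1}{1+\epsilon} I \le S \le (1+\epsilon) I.
\]

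Next I would pass to $S^{-1}$. Since $S$ is positive, self-adjoint and invertible on the finite-dimensional space $\cH_N$, its spectrum lies in the interval $[1/(1+\epsilon), 1+\epsilon]$. The spectrum of $S^{-1}$ is the set of reciprocals of eigenvalues of $S$, which again lies in $[1/(1+\epsilon), 1+\epsilon]$, yielding
\[
\frac{1}{1+\epsilon} I \le S^{-1} \le (1+\epsilon) I.
\]

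For the general exponent $a > 0$, I would apply the functional calculus (equivalently, diagonalize $S$ in an orthonormal eigenbasis). The function $t \mapsto t^a$ is monotone increasing on $(0,\infty)$, so if $\lambda \in [1/(1+\epsilon), 1+\epsilon]$ is an eigenvalue of $S$, then $\lambda^a \in [1/(1+\epsilon)^a, (1+\epsilon)^a]$ is the corresponding eigenvalue of $S^a$. This gives
\[
\frac{1}{(1+\epsilon)^a} I \le S^a \le (1+\epsilon)^a I,
\]
and applying the same reasoning to $S^{-1}$ (whose spectrum is also contained in $[1/(1+\epsilon), 1+\epsilon]$) yields the analogous bound for $S^{-a}$. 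There is no real obstacle here; the entire argument is a routine consequence of the equivalence of Riesz and frame bounds together with the spectral theorem on a finite-dimensional Hilbert space.
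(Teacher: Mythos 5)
Your proof is correct and follows essentially the same route as the paper's: both reduce the claim to the operator inequality $\frac{1}{1+\epsilon}I \le S \le (1+\epsilon)I$ obtained from the Riesz/frame bounds, and then transfer it to powers of $S$ by a spectral argument (the paper phrases this via $\|(T^*T)^{a/2}\|^2 = \|T^*T\|^a$ rather than explicit diagonalization, but the content is identical). No gaps.
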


\begin{proof}
Let $T$ be the analysis operator for the Riesz basis.  By the definition,
for any scalars $\{a_i\}_{i=1}^N$ we have
\[ \|T^*(\{a_i\}_{i=1}^N)\|^2= \|\sum_{i=1}^Na_i\varphi_i\|^2 \le (1+\epsilon)\|\{a_i\}_{i=1}^N\|^2.
\]
And similarly,
\[ \|T^*(\{a_i\}_{i=1}^N)\|^2 \ge \frac{1}{1+\epsilon}\|\{a_i\}_{i=1}^N\|^2.\]
It follows that $T$ satisfies the same inequalities.
For any $\varphi \in \cH_N$ and any $0<a$ we have
\begin{eqnarray*}
\langle S^a\varphi, \varphi \rangle &=& \langle (T^*T)^a\varphi, \varphi \rangle\\
&=& \langle (T^*T)^{a/2}\varphi,(T^*T)^{a/2}\varphi\rangle\\
&=&\|(T^*T)^{a/2}\varphi\|^2\\
&\le& \|(T^*T)^{a/2}\|^2 \|\varphi\|^2\\
&=& \|T^*T\|^a\|\varphi\|^2\\
&\le& (1+\epsilon)^a\|\varphi\|^2.
\end{eqnarray*}
This shows that $S^a \le (1+\epsilon)^aI$.  The lower bound is derived similarly.
\end{proof}

Finally, we need to measure the {\it angle} between spaces spanned by disjoint
subsets of a $\epsilon$-Riesz sequence.

\begin{proposition}\label{pp1}
Let $\{\varphi_i\}_{i=1}^M$ be an $\epsilon$-Riesz sequence and choose any
partition $\{I_1,I_2\}$ of $\{1,2,\ldots,M\}$.  If $\varphi \in span\ \{\varphi_i\}_{i\in I_1}$
and $\psi \in span\ \{\varphi_i\}_{i\in I_2}$ are unit vectors, then
\[ |\langle \varphi,\psi\rangle|<2 \epsilon\Bigl(1+\frac \epsilon 2\Bigr) .\]
\end{proposition}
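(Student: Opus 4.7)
The plan is a direct parallelogram-type computation, combined with a phase rotation so that it is enough to control the real part of the inner product.

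First I would normalize: since $\mathrm{span}\{\varphi_i\}_{i\in I_2}$ is a complex subspace, I can replace $\psi$ by a unimodular multiple $e^{i\theta}\psi$ without changing $|\langle\varphi,\psi\rangle|$ or losing unit norm, and thereby assume $\langle\varphi,\psi\rangle$ is real and nonnegative. Expand $\varphi=\sum_{i\in I_1}a_i\varphi_i$ and $\psi=\sum_{i\in I_2}b_i\varphi_i$, and let $c_i$ be the combined coefficient sequence, so $\varphi+\psi=\sum_{i\in I_1\cup I_2}c_i\varphi_i$.

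Next, I would bound $\sum|a_i|^2$ and $\sum|b_i|^2$ using the \emph{lower} $\epsilon$-Riesz estimate: since $\|\varphi\|=1$, the inequality $\frac{1}{1+\epsilon}\sum_{i\in I_1}|a_i|^2\le\|\varphi\|^2$ gives $\sum_{i\in I_1}|a_i|^2\le 1+\epsilon$, and likewise $\sum_{i\in I_2}|b_i|^2\le 1+\epsilon$. Applying the \emph{upper} $\epsilon$-Riesz estimate to the combined expansion yields
\[
\|\varphi+\psi\|^2 \;\le\; (1+\epsilon)\Bigl(\sum_{i\in I_1}|a_i|^2+\sum_{i\in I_2}|b_i|^2\Bigr) \;\le\; 2(1+\epsilon)^2.
\]
On the other hand, $\|\varphi+\psi\|^2=\|\varphi\|^2+\|\psi\|^2+2\,\mathrm{Re}\langle\varphi,\psi\rangle=2+2\langle\varphi,\psi\rangle$ after the phase normalization. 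Comparing the two expressions gives $\langle\varphi,\psi\rangle\le (1+\epsilon)^2-1=2\epsilon+\epsilon^2=2\epsilon\bigl(1+\tfrac{\epsilon}{2}\bigr)$, which is exactly the asserted bound on $|\langle\varphi,\psi\rangle|$.

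There is essentially no serious obstacle here; the only delicate points are (i) remembering that the $\epsilon$-Riesz inequality applies to the union $I_1\cup I_2$ (which is legitimate because the full family is an $\epsilon$-Riesz sequence), and (ii) justifying the phase rotation reduction so that the real-part bound yields the full absolute-value bound. The strict inequality in the statement would follow from the fact that the two $\epsilon$-Riesz inequalities used in the chain cannot simultaneously be saturated for arbitrary unit vectors $\varphi,\psi$, but if one prefers, using the symmetric argument with $\varphi-\psi$ in place of $\varphi+\psi$ gives the same bound for $-\mathrm{Re}\langle\varphi,\psi\rangle$, so the conclusion holds for the complex inner product without any phase trick.
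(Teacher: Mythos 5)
Your argument is correct and is essentially the paper's own proof: both expand $\|\varphi+\psi\|^2$, apply the upper $\epsilon$-Riesz bound to the combined coefficient sequence and the lower bound to each summand separately, and reduce $|\langle \varphi,\psi\rangle|$ to its real part by a phase rotation (the paper phrases this as $|\langle \varphi,\psi\rangle| = \max_{|\lambda|=1}\mathrm{Re}\,\langle \varphi,\lambda\psi\rangle$). One caution: your closing aside is wrong --- running the same computation with $\varphi-\psi$ only bounds $-\mathrm{Re}\,\langle\varphi,\psi\rangle$, and controlling $\pm\mathrm{Re}\,\langle\varphi,\psi\rangle$ does not control $|\langle\varphi,\psi\rangle|$ for a complex inner product, so the phase rotation cannot in fact be dispensed with.
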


\begin{proof}
Let $\varphi = \sum_{i\in I_1}a_i\varphi_i$ and $\psi = \sum_{i\in I_2}a_i\varphi_i$ and
we compute
\begin{eqnarray*}
\frac{1}{1+\epsilon}\sum_{i=1}^M |a_i|^2 &\le& \|\varphi+\psi\|^2\\
&=& \|\varphi\|^2 + \|\psi\|^2 + 2 Re\langle \varphi,\psi\rangle\\
&\le& (1+\epsilon)\sum_{i=1}^M|a_i|^2.
\end{eqnarray*}
Hence,
\begin{eqnarray*}
2Re\langle \varphi,\psi\rangle &\le& (1+\epsilon)\sum_{i=1}^M|a_i|^2 - (\|\varphi\|^2
+\|\psi\|^2)\\
&\le& (1+\epsilon)\sum_{i=1}^M|a_i|^2 - (\frac{1}{1+\epsilon}\sum_{i\in I_1}|a_i|^2
+ \frac{1}{1+\epsilon}\sum_{i\in I_2}|a_i|^2 ) \\
&=& (1+\epsilon - \frac{1}{1+\epsilon}\sum_{i=1}^M|a_i|^2\\
&=& \epsilon \frac{2+\epsilon}{1+\epsilon} \sum_{i=1}^M|a_i|^2.
\end{eqnarray*}
Next, we observe that $|\langle \varphi,\psi\rangle| = \max_{|\lambda| = 1} Re \langle \varphi, \lambda \psi\rangle$.
Thus, we obtain together with Proposition~\ref{p1},
\begin{eqnarray*} |\langle \varphi,\psi\rangle| &\le& \epsilon(1+\frac{\epsilon}{2})
 \frac{1}{1+\epsilon}\left [ \sum_{i\in I_1}|a_i|^2+ \sum_{i\in I_2}|a_i|^2 \right ]\\
%&\le& (1+\epsilon)\epsilon (1+\frac{\epsilon}{2})(\|\varphi\|^2+\|\psi\|^2)\\
&\le& \epsilon (1+ \frac \epsilon 2)(\|\varphi\|^2 + \|\psi\|^2) = 2 \epsilon (1+ \frac \epsilon 2) .
\end{eqnarray*}

\end{proof}

\section{Fusion Frames and the Restricted Isometry Property}\label{S1}

In this section we will show how to use tight frames of vectors which have the
$\epsilon$-{\it restricted isometry property} to construct $\epsilon$-nearly tight fusion frames.

\begin{definition}
A family of vectors $\{\varphi_i\}_{i=1}^M$ in $\cH_N$ has the {\it restricted
isometry property} with constant $0<\epsilon<1$ for sets of size $s\le N$ if
for every $I\subset \{1,2,\ldots,M\}$ with $|I|\le s$, the family $\{\varphi_i\}_{i\in I}$
is an $\epsilon$-Riesz basis for its span.
\end{definition}

The restricted isometry property is one of the cornerstones of {\it compressed sensing}.
Compressed sensing is one of the most active area of research today and so we refer the
reader to the tutorials \cite{FR,R} and their references for a background in the area.  It is known that
the optimal $\epsilon$ above is on the order of 
\[ \epsilon \sim \frac{s}{N}log\frac{M}{s}.\]

Now we will see how tight frames of restricted isometry vectors with constant
$\epsilon$ will produce
nearly tight fusion frames.

\begin{theorem}\label{t1}
Let $\{\varphi_i\}_{i=1}^M$ be a unit norm tight frame for $\cH_N$ which has RIP with constant
$\epsilon$ for sets of size $s$. Then for any partition $\{I_j\}_{j=1}^K$ 
of $\{1,2,\ldots,M\}$ with $|I_j|\le s$ if we let
\[ W_j = span_{i\in I_j}\varphi_i,\]
then $\{W_j,1\}_{j=1}^K$ is a %tight %%% removed %%%
fusion frame with fusion frame bounds
\[ \frac{M}{(1+\epsilon)N},\ \frac{M(1+\epsilon)}{N}.\]
Moreover, if $L\subset \{1,2,\ldots,K\}$ and for $j\in L$ we have
$J_j \subset I_j$ with $\sum_{j=1}^K|J_j|\le s$ then for all scalars we
have
\[ \frac{1}{1+\epsilon}\sum_{j=1}^L\|\sum_{i\in J_j}a_i\varphi_i\|^2 \le \|\sum_{j=1}^L
\sum_{i\in J_j}a_i\varphi_i\|^2 \le (1+\epsilon)\sum_{j=1}^K\|\sum_{i\in J_j}
a_i\varphi_i\|^2.\]
\end{theorem}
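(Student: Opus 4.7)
The plan is to prove the two parts separately, drawing on Lemma~\ref{l5} and Proposition~\ref{p1} respectively.

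For the fusion frame bounds, I would first exploit the fact that each block $\{\varphi_i\}_{i\in I_j}$ has at most $s$ elements, so by the RIP hypothesis it is an $\epsilon$-Riesz basis for its span $W_j$. Let $S_j$ denote its localized frame operator acting on $W_j$. By Lemma~\ref{l5},
\[
\frac{1}{1+\epsilon} I_{W_j} \le S_j \le (1+\epsilon) I_{W_j}.
\]
Since $\varphi_i\in W_j$ for every $i\in I_j$, one has $\langle\varphi,\varphi_i\rangle=\langle P_j\varphi,\varphi_i\rangle$, so
\[
\sum_{i\in I_j}|\langle\varphi,\varphi_i\rangle|^2=\langle S_j P_j\varphi,P_j\varphi\rangle\in\left[\tfrac{1}{1+\epsilon}\|P_j\varphi\|^2,\,(1+\epsilon)\|P_j\varphi\|^2\right].
\]
Summing over $j$ and invoking unit-norm tightness of $\{\varphi_i\}_{i=1}^M$, which gives $\sum_{i=1}^M|\langle\varphi,\varphi_i\rangle|^2=\frac{M}{N}\|\varphi\|^2$, and then rearranging the resulting double inequality produces exactly the stated fusion frame bounds $\frac{M}{(1+\epsilon)N}$ and $\frac{M(1+\epsilon)}{N}$.

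For the ``moreover'' statement, the crucial observation is that $\sum_{j\in L}|J_j|\le s$, so the union $\bigcup_{j\in L}J_j$ still indexes a set of size at most $s$, and by the RIP hypothesis $\{\varphi_i\}_{i\in \bigcup J_j}$ is an $\epsilon$-Riesz sequence. I would then apply Proposition~\ref{p1} to this sequence with the partition $\{J_j\}_{j\in L}$; the ``hence'' part of that proposition immediately sandwiches $\|\sum_{j\in L}\sum_{i\in J_j}a_i\varphi_i\|^2$ between multiples of $\sum_{j\in L}\|\sum_{i\in J_j}a_i\varphi_i\|^2$. I note that two applications of the $\epsilon$-Riesz estimate chained together yield the constant $(1+\epsilon)^2$ rather than the $(1+\epsilon)$ displayed in the statement; I expect the displayed constant to be a typographical slip, since Proposition~\ref{p1} cannot produce a sharper factor.

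There is no deep obstacle to overcome here; the analytical content has been packaged into the two preparatory results. The only step demanding a second of care is the identification $\sum_{i\in I_j}|\langle\varphi,\varphi_i\rangle|^2=\langle S_j P_j\varphi,P_j\varphi\rangle$, which relies on $\varphi_i\in W_j$ being compatible with the orthogonal projection $P_j$. After that the argument is bookkeeping plus the tight-frame identity and a direct invocation of Proposition~\ref{p1}.
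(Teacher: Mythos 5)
Your argument for the fusion frame bounds is essentially the paper's. The identity $\sum_{i\in I_j}|\langle\varphi,\varphi_i\rangle|^2=\langle S_jP_j\varphi,P_j\varphi\rangle$ combined with $\frac{1}{1+\epsilon}I_{W_j}\le S_j\le(1+\epsilon)I_{W_j}$ is exactly the content of the paper's Lemma~\ref{lemma5} (proved there by diagonalizing the local frame operator rather than quoting the operator inequality, but the computation is the same), and summing over $j$ and invoking $\sum_{i=1}^M|\langle\varphi,\varphi_i\rangle|^2=\frac{M}{N}\|\varphi\|^2$ is the entirety of the paper's proof. For the \emph{moreover} clause the paper's proof is silent --- it ends after the frame-bound computation --- so your route through Proposition~\ref{p1} is the natural one, and your remark about the constant is correct: since $\bigcup_{j\in L}J_j$ has at most $s$ elements, the ``hence'' of Proposition~\ref{p1} sandwiches $\|\sum_{j\in L}\sum_{i\in J_j}a_i\varphi_i\|^2$ with the factors $(1+\epsilon)^{\pm 2}$, not $(1+\epsilon)^{\pm 1}$; chaining the blockwise Riesz bounds with the global one unavoidably costs two factors. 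This reading is consistent with the paper's own later use of Proposition~\ref{p1} in the orthonormal-replacement theorem, where the constants $(1+\epsilon)^{\pm2}$ do appear, so the single factor displayed in Theorem~\ref{t1} is best regarded as a slip. In short: part one matches the paper; part two you actually supply an argument the paper omits, with the honestly corrected constant.
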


To prove the theorem we need a lemma.

\begin{lemma}\label{lemma5}
Under the assumptions of the theorem, if $P_j$ is the orthogonal projection of
$\cH_N$ onto $W_j$, then for any $\varphi \in \cH_N$ we have:
\[    \frac{1}{1+\epsilon}\sum_{i\in I}|\langle \varphi,\varphi_i\rangle|^2
  \le \|P_j\varphi \|^2 \le (1+\epsilon)\sum_{i\in I}|\langle \varphi,\varphi_i\rangle|^2.  \]
  Hence,
  \[ \frac{M}{(1+\epsilon)N}\|\varphi\|^2 \le \sum_{j=1}^K\|P_j\varphi\|^2 \le (1+\epsilon)
  \frac{M}{N}\|\varphi\|^2.\]
\end{lemma}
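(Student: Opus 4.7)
The plan is to exploit two facts: first, that the orthogonal projection $P_j$ onto $W_j = \mathrm{span}\{\varphi_i\}_{i \in I_j}$ does not change inner products with vectors in $W_j$; and second, that the $\epsilon$-Riesz basis property of $\{\varphi_i\}_{i \in I_j}$ (guaranteed by RIP since $|I_j|\le s$) yields matching $\epsilon$-frame bounds for vectors sitting inside $W_j$.

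First I would fix $j$ and $\varphi \in \cH_N$. Since $\varphi - P_j\varphi \perp W_j$ and $\varphi_i \in W_j$ for every $i \in I_j$, we have the identity $\langle \varphi, \varphi_i\rangle = \langle P_j\varphi, \varphi_i\rangle$ for all $i \in I_j$. Next, since $\{\varphi_i\}_{i \in I_j}$ is an $\epsilon$-Riesz basis for $W_j$ and the Riesz bounds of a linearly independent set in a finite-dimensional space coincide with the frame bounds (as noted in the text preceding Proposition~\ref{p1}), for every $\psi \in W_j$ we obtain
\[
\frac{1}{1+\epsilon}\|\psi\|^2 \le \sum_{i \in I_j}|\langle \psi, \varphi_i\rangle|^2 \le (1+\epsilon)\|\psi\|^2.
\]
Applying this with $\psi = P_j\varphi$ and replacing each $\langle P_j\varphi, \varphi_i\rangle$ by $\langle \varphi, \varphi_i\rangle$ via the previous identity, then rearranging, gives the desired two-sided bound relating $\|P_j\varphi\|^2$ and $\sum_{i \in I_j}|\langle \varphi, \varphi_i\rangle|^2$.

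For the \emph{hence} part, I would sum the inequality over $j=1,\dots,K$. Because $\{I_j\}_{j=1}^K$ partitions $\{1,\dots,M\}$,
\[
\sum_{j=1}^K \sum_{i \in I_j} |\langle \varphi,\varphi_i\rangle|^2 = \sum_{i=1}^M |\langle \varphi,\varphi_i\rangle|^2.
\]
Since $\{\varphi_i\}_{i=1}^M$ is a unit norm tight frame for $\cH_N$, the tight bound is $M/N$ (because $\sum_i \|\varphi_i\|^2 = M$ equals the tight bound times $N$), so the right-hand side equals $(M/N)\|\varphi\|^2$. Combining with the per-block inequalities yields
\[
\frac{M}{(1+\epsilon)N}\|\varphi\|^2 \le \sum_{j=1}^K \|P_j\varphi\|^2 \le (1+\epsilon)\frac{M}{N}\|\varphi\|^2,
\]
as claimed.

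There is no genuine obstacle here; the only subtlety is recognizing that the hypothesis $|I_j| \le s$ is exactly what turns the RIP assumption into the $\epsilon$-Riesz basis property on each $W_j$, and that in the finite-dimensional setting a Riesz basis for $W_j$ is automatically a frame for $W_j$ with the same bounds. The rest is bookkeeping: the projection identity to pass from $\varphi$ to $P_j\varphi$, and a partition sum to invoke the tight-frame bound $M/N$.
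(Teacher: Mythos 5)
Your proof is correct and is in substance the same as the paper's: both arguments reduce the claim to the fact that the local frame operator of $\{\varphi_i\}_{i\in I_j}$ has spectrum in $[\frac{1}{1+\epsilon},1+\epsilon]$ on $W_j$, together with the observation that $\langle \varphi,\varphi_i\rangle=\langle P_j\varphi,\varphi_i\rangle$ for $i\in I_j$ (which the paper encodes by expanding in the eigenbasis of that operator), and both finish the \emph{hence} by summing over the partition and invoking the tight frame constant $M/N$. The only difference is presentational — you cite the equality of Riesz and frame bounds for a linearly independent spanning set of $W_j$ where the paper writes out the spectral decomposition explicitly.
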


\begin{proof}
Let $S$ be the frame operator:
\[ S\varphi = \sum_{i\in I}\langle \varphi,\varphi_i\rangle \varphi_i,\mbox{ for all }\varphi \in \cH_N.\]
Let $\{e_j\}_{j=1}^{M}$ be the eigenbasis for $S$ with eigenvalues 
\[ (1+\epsilon) \ge \lambda_1 \ge \cdots \ge \lambda_{|I|} \ge \frac{1}{1+\epsilon} \ge
0 \ge 0 \ge \cdots \ge 0.\]
Then
\[ P_j\varphi = \sum_{j=1}^{|I|}\langle \varphi,e_j\rangle e_j.\]
So
\[ \|P_j\varphi \|^2 = \sum_{i=1}^{|I|}|\langle \varphi,e_i\rangle|^2.\]
On the other hand,
\[ S\varphi = \sum_{j=1}^{|I|}\lambda_j \langle \varphi,e_j\rangle e_j,\]
and so
\[ \langle S\varphi,\varphi\rangle = \sum_{j=1}^{|I|}\lambda_j|\langle \varphi,e_j
\rangle|^2.\]
That is,
\begin{eqnarray*}
\|P_j\varphi\|^2 &=&
\sum_{j=1}^{|I|}|\langle \varphi,e_j\rangle |^2\\
&\le&(1+\epsilon) \sum_{j=1}^{|I|}\lambda_j|\langle \varphi,e_j\rangle|^2\\
&=& (1+\epsilon)\langle S\varphi,\varphi \rangle\\
&=& (1+\epsilon)\sum_{j=1}^{|I|}|\langle \varphi,\varphi_j \rangle|^2\\
\end{eqnarray*}
The other inequality is similar.

For the {\it hence}, we just observe that
\[\sum_{i=1}^M |\langle \varphi,\varphi_i\rangle|^2 = \frac{M}{N}\|\varphi\|^2.\]
\end{proof}

\noindent {\bf Proof of Theorem \ref{t1}}:

For each $j=1,2,\ldots,K$ let $P_j$ be the othogonal projection of $\cH_N$ onto
$W_j$.  Then by the Lemma \ref{lemma5}, for any $\varphi \in \cH_N$ we have:
\[ \sum_{j=1}^K \|P_j\varphi\|^2 \le (1+\epsilon)\sum_{j=1}^K\sum_{i\in I_j}|\langle \varphi,
\varphi_j\rangle|^2= (1+\epsilon)\sum_{i=1}^M|\langle \varphi,\varphi_i\rangle|^2
= (1+\epsilon)\frac{M}{N}\|\varphi\|^2.
\]
Similarly,
\[ \sum_{j=1}^K \|P_j\varphi\|^2 \ge \frac{1}{(1+\epsilon)}\sum_{j=1}^K\sum_{i\in I_j}|\langle \varphi,
\varphi_j\rangle|^2= \frac{1}{(1+\epsilon)}\sum_{i=1}^M|\langle \varphi,\varphi_i\rangle|^2
= \frac{1}{(1+\epsilon)}\frac{M}{N}\|\varphi\|^2.
\]
This completes the proof.
\vskip12pt

\section{Nearly Equi-Isoclinic Fusion Frames and the Restricted Isometry Property}

In this section, we will see how to use tight frames of vectors with the restricted isometry
property to construct nearly equi-isoclinic fusion frames.

\begin{definition}
Given two subspaces $W_1,W_2$ of a Hilbert space $\cH$ with dim $W_1=k\le$ dim $W_2=\ell$,
the {\em principal angles} $(\theta_1,\theta_2,\ldots \theta_k)$ between the subspaces are defined as follows:
The first principal angle is
\[ \theta_1 = \min\{\arccos |\langle \varphi,\psi\rangle|: \varphi \in S_{W_1},\psi\in S_{W_2}\}\]
where $S_{W_i}=\{\varphi \in W_i : \norm{\varphi}=1\}$. Two vectors $\varphi_1,\psi_1$ are called {\em principal vectors} if they give the minimum above.

The other principal angles and vectors are then defined recursively via
\[ \theta_i = min\{\arccos |\langle \varphi,\psi\rangle|: \varphi \in S_{W_1},\psi\in S_{W_2},  \mbox{ and }
\varphi \perp \varphi_j,\ \psi \perp \psi_j, 1\le j \le i-1\}.\]
\end{definition}

\begin{definition}
Two $k$-dimensional subspaces $W_1,W_2$ of a Hilbert space are isoclinic with parameter
$\lambda$, if the angle $\theta$ between any $\varphi\in W_1$ and its orthogonal projection
$P\varphi$ in $W_2$ is unique with $\cos^2\theta = \lambda$.

Multiple subspaces are equi-isoclinic if they are pairwise isoclinic with the same
parameter $\lambda$.
\end{definition}

An alternative definition is given in \cite{ET07} where two subspaces are called
isoclinic if the stationary values of the angles of two lines, one in each subspace,
are equal.  The geometric characterization given by Lemmens and Seidel \cite{LS} is that
when a sphere in one subspace is projected onto the other subspace, then
it remains a sphere, although the radius may change. This is all equivalent to the principal angles between the subspaces 
being identical.  

Much work has been done on finding the maximum number of equi-isoclinic subspaces
given the dimensions of the overall space and the subspaces (and often the
parameter $\lambda$).  Specifically, Seidel and Lemmens \cite{LS} give an upper bound on the
number of real equi-isoclinic subspaces and Hoggar \cite{H} generalizes this
to vector spaces over $\RR$ and $\CC$.

\begin{definition}
Two $K$-dimensional subspaces $W_1,W_2$ with associated orthogonal projections $P_1$ and
$P_2$ are isoclinic with parameter $\lambda \ge 0$ if 
$$
  P_1P_2 P_1 = \lambda P_1 \mbox{ and } P_2 P_1 P_2 = \lambda P_2 \, .
$$
A family of subspaces $\{W_j\}$ is $\epsilon$-nearly equi-isoclinic if there exists $\lambda \ge 0$ 
such that for every two subspaces $P_i$ and $P_j$, $i \ne j$,
$$
   (\lambda - \epsilon^2) P_1 \le  P_1 P_2 P_1 \le  (\lambda + \epsilon^2) P_1 \mbox{ and }   (\lambda - \epsilon^2) P_2 \le  P_2 P_1P_2  \le  (\lambda + \epsilon^2) P_2 \, .
$$
We will call a equi-dimensional fusion frame $\{W_i\}_{i=1}^K$ $\epsilon$-nearly equi-isoclinic
if its subspaces $\{W_i\}_{i=1}^K$ are $\epsilon$-nearly equi-isoclinic.
\end{definition}

It can be checked that a fusion frame $\{W_i,1\}_{i=1}^K$ is $\epsilon$-nearly equi-isoclinic
if and only if the squared cosines of the principal angles between any two of its subspaces are within $\epsilon^2$ of 
a fixed $\lambda$. %%%

A related property is:

\begin{definition}
A fusion frame $\{W_i,v_i\}_{i=1}^K$ is $\epsilon$-nearly orthogonal if whenever
we take unit vectors $\varphi \in W_i$ and $\psi \in W_j$ for $1\le i\not= j \le K$ 
we have $|\langle \varphi,
\psi\rangle |<\epsilon$.
\end{definition}

An $\epsilon$-nearly orthogonal fusion frame is $\epsilon$-nearly equi-isoclinic
by default in the sense that it satisfies the definition with $\lambda=0$.

\begin{theorem}
Let $\{\varphi_i\}_{i=1}^M$ be a unit norm tight frame for $\cH_N$ which has 
the restricted isometry property with constant
$\epsilon$ for sets of size $s$. Then for any partition $\{I_j\}_{j=1}^K$ 
of $\{1,2,\ldots,M\}$ with $|I_j|\le \frac{s}{2}$ if we let
\[ W_j = span_{i\in I_j}\varphi_i,\]
then $\{W_j,1\}_{j=1}^K$ is a $\epsilon$-tight fusion frame with fusion frame bounds
\[ \frac{M}{(1+\epsilon)N},\ \frac{M(1+\epsilon)}{N}.\]
Moreover, this is a $2\epsilon(1+\epsilon)^2$-nearly orthogonal fusion frame and hence it is
a $2\epsilon(1+\epsilon)^2$-nearly equi-isoclinic fusion frame.
\end{theorem}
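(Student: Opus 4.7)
The proof naturally splits into three parts, and the structure of the hypothesis $|I_j|\le s/2$ (rather than $\le s$) is chosen precisely so that we can pair any two blocks and still invoke the RIP.

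First, I would observe that the fusion frame bounds $M/((1+\epsilon)N)$ and $M(1+\epsilon)/N$ are an immediate application of Theorem \ref{t1}, since each $|I_j|\le s/2\le s$ satisfies its hypothesis. This also shows that the fusion frame is $\epsilon$-nearly tight with constant $C=M/N$, which is what ``$\epsilon$-tight'' means in the language of the paper.

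Next, for the near-orthogonality, I would fix any two indices $i\ne j$ and pick unit vectors $\varphi\in W_i$ and $\psi\in W_j$. The essential point is that $|I_i\cup I_j|\le s/2+s/2=s$, so the RIP hypothesis tells us that $\{\varphi_k\}_{k\in I_i\cup I_j}$ is an $\epsilon$-Riesz basis for its span. I would then apply Proposition \ref{pp1} to this $\epsilon$-Riesz sequence with the two-block partition $\{I_i,I_j\}$, which immediately yields
\[
|\langle\varphi,\psi\rangle|\le 2\epsilon\Bigl(1+\tfrac{\epsilon}{2}\Bigr).
\]
Since $1+\tfrac{\epsilon}{2}\le (1+\epsilon)^2$ for $\epsilon\ge 0$, this gives the claimed bound $|\langle\varphi,\psi\rangle|<2\epsilon(1+\epsilon)^2$, so the fusion frame is $2\epsilon(1+\epsilon)^2$-nearly orthogonal.

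Finally, the near equi-isoclinic conclusion is a direct consequence of the observation made right before the theorem: any $\delta$-nearly orthogonal fusion frame is $\delta$-nearly equi-isoclinic with parameter $\lambda=0$, because bounding $|\langle\varphi,\psi\rangle|$ uniformly for unit $\varphi\in W_i$ and $\psi\in W_j$ is equivalent (by the variational characterization $\|P_iP_jP_i\|=\max_{\varphi\in S_{W_i}}\|P_j\varphi\|^2=\max_{\varphi\in S_{W_i}}\sup_{\psi\in S_{W_j}}|\langle\varphi,\psi\rangle|^2$) to bounding $P_iP_jP_i$ and $P_jP_iP_j$ in operator norm by that same constant squared. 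There is no real obstacle in this proof; the only conceptual point to verify is the bookkeeping that $|I_i|+|I_j|\le s$ is exactly what licenses the use of Proposition \ref{pp1}, which is why the theorem's hypothesis had to be strengthened from $s$ to $s/2$ compared with Theorem \ref{t1}.
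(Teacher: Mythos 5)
Your proof is correct and follows essentially the same route as the paper, which simply cites Theorem \ref{t1} for the fusion frame bounds and Proposition \ref{pp1} for the near-orthogonality; you have additionally filled in the two details the paper leaves implicit, namely that $|I_i\cup I_j|\le s$ is what licenses applying Proposition \ref{pp1} (the reason for the $s/2$ hypothesis) and that $2\epsilon(1+\tfrac{\epsilon}{2})\le 2\epsilon(1+\epsilon)^2$.
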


\begin{proof}
The first part of the theorem is immediate by Theorem \ref{t1} and the {\it moreover} part
is immediate by Proposition \ref{pp1}.
\end{proof}

\section{The Restricted Isometry Property with Orthonormal Subsets}

A natural problem is the following:
\begin{problem}
Can we construct a family of vectors $\{\varphi_i\}_{i=1}^M$ in $\cH_N$ with the
restricted isometry property with constant $0<\epsilon <1$ for sets of size $s$
our of orthonormal bases for $\cH_N$?  Or, can they be constructed from
orthonormal sequences each having $s$ elements?
\end{problem}

We will now look at how we might try to alter a family of vectors with the RIP
property to a set which contains orthonormal sequences with $s$ vectors each.
We will need a lemma for this proof.

\begin{lemma}\label{lem1}
Let $W_1,W_2$ be subspaces of $\cH_N$ and let $T:W_1\rightarrow W_2$ 
%%%
be a surjection which satisfies
\[ \|\varphi-T\varphi\|^2 \le \epsilon \|\varphi\|^2,\mbox{ for all } \varphi\in W_1.\]
Let $P_1$ be the orthogonal projection of $\cH_N$ onto $W_1$.
Then
\[ \|\psi-P_1\psi\|^2 \le 4\frac{\epsilon}{( 1-\epsilon)^2}\|\psi\|^2, \mbox{ for all }\psi \in W_2.\]
Hence,
\[ \|P_1\psi\|^2 \ge (1- \frac{4\epsilon}{(1-\epsilon)^2})\|\psi\|^2.\]

\end{lemma}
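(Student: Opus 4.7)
The plan is to use the surjectivity of $T$ to pull each $\psi \in W_2$ back to some $\varphi \in W_1$ with $T\varphi = \psi$, then observe that since $P_1\psi$ is by definition the nearest point in $W_1$ to $\psi$, we have $\|\psi - P_1\psi\| \le \|\psi - \varphi\| = \|\varphi - T\varphi\|$. The hypothesis then bounds this by $\sqrt{\epsilon}\|\varphi\|$, so the only remaining task is to express $\|\varphi\|$ in terms of $\|\psi\|$.

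First I would apply the reverse triangle inequality to $\psi = T\varphi$: from $\|\varphi - \psi\| \le \sqrt{\epsilon}\|\varphi\|$ we get $\|\psi\| \ge \|\varphi\| - \sqrt{\epsilon}\|\varphi\| = (1-\sqrt{\epsilon})\|\varphi\|$, which rearranges to $\|\varphi\| \le \|\psi\|/(1-\sqrt{\epsilon})$. Combining with the previous step yields
\[
 \|\psi - P_1\psi\|^2 \le \epsilon \|\varphi\|^2 \le \frac{\epsilon}{(1-\sqrt{\epsilon})^2}\|\psi\|^2.
\]
To arrive at the stated form $4\epsilon/(1-\epsilon)^2$, I would factor $1-\epsilon = (1-\sqrt{\epsilon})(1+\sqrt{\epsilon})$ and use $1+\sqrt{\epsilon} \le 2$ for $0 < \epsilon < 1$, giving $(1-\sqrt{\epsilon})^{-2} \le 4/(1-\epsilon)^2$.

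The ``hence'' clause is then routine: since $P_1$ is an orthogonal projection, $\|\psi\|^2 = \|P_1\psi\|^2 + \|\psi - P_1\psi\|^2$, so subtracting the bound on $\|\psi - P_1\psi\|^2$ from $\|\psi\|^2$ gives the advertised lower bound on $\|P_1\psi\|^2$.

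The only subtle point, and really the only obstacle, is keeping careful track of which norm is being controlled by which: the hypothesis bounds $\|\varphi - T\varphi\|$ by $\|\varphi\|$ (the domain norm) rather than $\|T\varphi\|$, so the pullback step must come with a small-perturbation argument for $\|\varphi\|$ in terms of $\|\psi\|$, and this is what forces the factor $1/(1-\sqrt{\epsilon})$ and, after a crude bound, the factor $4$ in the final constant.
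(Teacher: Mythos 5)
Your proof is correct, and it takes a slightly different (and in fact more efficient) route than the paper's. The paper also pulls $\psi$ back to a preimage $\varphi$ with $T\varphi=\psi$, but then writes $\|\psi-P_1\psi\|=\|\psi-P_1T\varphi\|\le\|\psi-\varphi\|+\|\varphi-P_1T\varphi\|=\|T\varphi-\varphi\|+\|P_1(\varphi-T\varphi)\|$, picking up \emph{two} error terms each bounded by $\sqrt{\epsilon}\,\|\varphi\|$, and then converts $\|\varphi\|=\|T^{-1}\psi\|$ into $\|\psi\|$ via a bound on $\|T^{-1}\|$. You instead invoke the minimality property of the orthogonal projection, $\|\psi-P_1\psi\|=\mathrm{dist}(\psi,W_1)\le\|\psi-\varphi\|$, which costs only a single factor $\sqrt{\epsilon}\,\|\varphi\|$; you then spend the saved factor of $2$ on the crude estimate $(1+\sqrt{\epsilon})^2\le 4$ when rewriting $(1-\sqrt{\epsilon})^{-2}$ as $\le 4(1-\epsilon)^{-2}$. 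Two remarks. First, your route only needs surjectivity (any preimage will do), whereas the paper's invokes $T^{-1}$ and hence implicitly the injectivity that follows from $\|T\varphi\|\ge(1-\sqrt{\epsilon})\|\varphi\|$. Second, your bookkeeping is actually the more careful one: the hypothesis $\|\varphi-T\varphi\|^2\le\epsilon\|\varphi\|^2$ gives $\|T\varphi\|\ge(1-\sqrt{\epsilon})\|\varphi\|$, so the paper's claimed estimates $(1-\epsilon)^2\|\varphi\|^2\le\|T\varphi\|^2$ and $\|T^{-1}\|\le 1/(1-\epsilon)$ should really read $(1-\sqrt{\epsilon})^2\|\varphi\|^2$ and $1/(1-\sqrt{\epsilon})$, which would leave the paper's argument with the weaker constant $4\epsilon/(1-\sqrt{\epsilon})^2$; your single-error-term argument recovers the constant $4\epsilon/(1-\epsilon)^2$ exactly as stated in the lemma, and even yields the sharper intermediate bound $\epsilon/(1-\sqrt{\epsilon})^2$. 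The ``hence'' step via the Pythagorean identity is the same in both.
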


\begin{proof} %%%
First note that for any $\varphi \in W_1$ 
\[ (1-\epsilon)^2\|\varphi\|^2 \le \|T\varphi\|^2 \le (1+\epsilon)^2\|\varphi\|^2.\]
Next we have for any $\varphi \in W_1$
\[ \|\varphi-T\varphi\|^2 = \|\varphi - P_1T\varphi\|^2=\|P_1(I-T)\varphi\|^2 + \|(I-P_1)(I-T)\varphi\|^2
\le \epsilon \|\varphi\|^2.\]
Let $\psi \in W_2$.  Choose $\varphi \in W_1$ so that $T\varphi = \psi$.  Now we
compute
\begin{eqnarray*}
\|\psi-P_1\psi\| &=& \|\psi-P_1T\varphi\|\\
&\le& \|\psi - \varphi\| + \|\varphi - P_1T\varphi\|\\
&\le& \|T\varphi - \varphi\|+\|\varphi-P_1T\varphi\|\\
&\le& \sqrt{\epsilon}\|\varphi\| + \sqrt{\epsilon}\|\varphi\|\\
&\le& 2\sqrt{\epsilon}\|T^{-1}\psi\|\\
&\le& 2\sqrt{\epsilon}\|T^{-1}\|\|\psi\|\\
&\le& 2\frac{\sqrt{\epsilon}}{1-\epsilon}\|\psi\|.
\end{eqnarray*}

For the hence, we note that by Pythagoras
\begin{eqnarray*}
\|P_1\psi\|^2 &=&  \|\psi\|^2 - \|(I-P_1)\psi\|^2 \\
&\ge& (1- \frac{4\epsilon}{(1-\epsilon)^2})\|\psi\|^2 \, .
%&=& \left ( 1-  \frac{2\sqrt{\epsilon}}{1+\epsilon}\right )\|\psi\|\\
%&=& \frac{(1-\sqrt{\epsilon})^2}{1+\epsilon}\|\psi\|.
\end{eqnarray*}
%Hence,
%\[ \|P_j\psi\|^2 \ge \frac{(1-\sqrt{\epsilon})^4}{(1+\epsilon)^2}.\] 
\end{proof}

Now we are ready for the construction of RIP families which contain orthonormal
sets.

\begin{theorem}
Let $\{\varphi_i\}_{i=1}^M$ be a family of vectors in $\cH_N$ having the restricted isometry
property with constant $0<\epsilon <1$ for sets of size $s$.  Partition $\{1,2,\ldots,M\}$
into sets $\{I_j\}_{j=1}^K$ with $|I_j|\le s$ for all $j=1,2,\ldots,K$.   For each $j$ let
$S_j$ be the frame operator for $\{\varphi_i\}_{i\in I_j}$.  For $K_1 \le K$, replace 
for ach $j\le K_1$ the family
$\{\varphi_i\}_{i\in I_j}$ by $\{S_j^{-1/2}\varphi_i\}_{i\in I_j}$, which is an orthonormal
basis for its span.  Then $\{S_j^{-1/2}\varphi_i\}_{i\in I_j;j=1,2,\ldots,K_1} \cup
\{\varphi_i\}_{i\in I_j:K_1+1\le j \le K}=:\{\psi_i\}_{i=1}^M$ has the restricted isometry property 
and for sets $J \subset \{1,2,\ldots,M\}$ with $|J|\le s$ we have for all families of
scalars $\{a_i\}_{i\in J}$,
\[    \left [ \frac{1-4\epsilon/(1-\epsilon)^2}{(1+\epsilon)^2} - 4\epsilon(1+\epsilon)\sqrt{K_1}\right ] 
\left ( \sum_{i\in J}|a_i|^2 \right )^{1/2}      \]
\[\le             \|\sum_{i\in J}a_i\psi_i\|\le 
\left [ ((1+\epsilon)^{3/2}+4\epsilon(1+\epsilon)\sqrt{K_1}\right ] 
\left ( \sum_{i\in J}|a_i|^2 \right )^{1/2}.
\] 
\end{theorem}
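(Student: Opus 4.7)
The strategy is to fix $J \subset \{1,\ldots,M\}$ with $|J|\le s$, set $J_j := J \cap I_j$ and $f_j := \sum_{i \in J_j} a_i \varphi_i \in W_j$, and rewrite the new sum as a small perturbation of the old one. Since $\psi_i = S_j^{-1/2}\varphi_i$ on $I_j$ for $j\le K_1$ and $\psi_i = \varphi_i$ otherwise, linearity of $S_j^{-1/2}$ on $W_j$ yields
\[ \sum_{i\in J} a_i \psi_i \;=\; \sum_{i\in J} a_i \varphi_i \;+\; \Delta, \qquad \Delta := \sum_{j\le K_1}(S_j^{-1/2}-I)f_j. \]
The two stated inequalities then reduce to bounding $\|\sum a_i\varphi_i\|$ above and below, bounding $\|\Delta\|$ above, and combining via the triangle and reverse triangle inequalities.

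The main term is handled by Proposition \ref{p1}. Since $|J|\le s$, the RIP hypothesis makes $\{\varphi_i\}_{i\in J}$ an $\epsilon$-Riesz sequence, so Proposition \ref{p1} with the induced partition $\{J_j\}$ sandwiches $\|\sum_{i\in J}a_i\varphi_i\|^2$ between $(1+\epsilon)^{\pm 2}\sum_j\|f_j\|^2$, while a second use of Proposition \ref{p1} compares $\sum_j\|f_j\|^2$ with $\sum_i|a_i|^2$ up to $(1+\epsilon)^{\pm 1}$. Chaining gives $\|\sum_{i\in J}a_i\varphi_i\|$ between $(1+\epsilon)^{-3/2}$ and $(1+\epsilon)^{3/2}$ times $(\sum|a_i|^2)^{1/2}$. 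For $\Delta$, Lemma \ref{l5} applied to each $j\le K_1$ (where $\{\varphi_i\}_{i\in I_j}$ is $\epsilon$-Riesz) yields $(1+\epsilon)^{-1/2}I \le S_j^{-1/2} \le (1+\epsilon)^{1/2}I$ on $W_j$, so $\|S_j^{-1/2}-I\|_{W_j}$ is of order $\epsilon$. A triangle inequality in $j$, Cauchy--Schwarz over the $K_1$ blocks, and a third application of Proposition \ref{p1} to dominate $\sum_j\|f_j\|^2$ by $(1+\epsilon)\sum|a_i|^2$ then give
\[ \|\Delta\| \;\le\; c\epsilon\sqrt{K_1}\Bigl(\sum_j\|f_j\|^2\Bigr)^{1/2} \;\le\; c\epsilon(1+\epsilon)^{1/2}\sqrt{K_1}\Bigl(\sum_{i\in J}|a_i|^2\Bigr)^{1/2}, \]
which, after absorbing absolute constants, matches the $4\epsilon(1+\epsilon)\sqrt{K_1}$ correction of the statement.

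Combining the two estimates via the triangle inequality produces the upper bound directly. For the lower bound, the reverse triangle inequality supplies an estimate of the qualitative form $(1+\epsilon)^{-3/2} - 4\epsilon(1+\epsilon)\sqrt{K_1}$; the precise leading factor $(1 - 4\epsilon/(1-\epsilon)^2)/(1+\epsilon)^2$ in the statement strongly suggests a further refinement via Lemma \ref{lem1}, applied by viewing the linear identification $\sum a_i\varphi_i \mapsto \sum a_i\psi_i$ between the spans of $\{\varphi_i\}_{i\in J}$ and $\{\psi_i\}_{i\in J}$ as a near-identity surjection and extracting the resulting projection lower bound. The main obstacle is executing this refinement so the stated constants appear exactly; the decomposition and spectral estimates above already deliver a bound of the correct form, so what remains is careful constant tracking in the lower-bound step.
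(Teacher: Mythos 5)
Your decomposition is genuinely different from the paper's and, once the arithmetic is finished, it works; in fact it gives \emph{stronger} constants than the ones stated. The paper does not write $\sum_{i\in J}a_i\psi_i$ as $\sum_{i\in J}a_i\varphi_i+\Delta$; instead it splits the new vector into $\sum_j P_j(\cdot)+\sum_j(I-P_j)(\cdot)$, where $P_j$ projects onto the span of the \emph{original} block $\{\varphi_i\}_{i\in J_j}$, applies Proposition \ref{p1} to the projected part (which lives in the span of the $\epsilon$-Riesz sequence $\{\varphi_i\}_{i\in J}$), and uses Lemma \ref{lem1} twice: once to bound the remainder $\|(I-P_j)\sum a_iS_j^{-1/2}\varphi_i\|\le 4\epsilon(1+\epsilon)(\sum|a_i|^2)^{1/2}$, and once (the ``hence'') to bound $\|P_j\sum a_iS_j^{-1/2}\varphi_i\|^2$ from below by $(1-4\epsilon/(1-\epsilon)^2)\sum|a_i|^2$. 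That is precisely where the factors $1-4\epsilon/(1-\epsilon)^2$, $(1+\epsilon)^{\pm 2}$, and $4\epsilon(1+\epsilon)\sqrt{K_1}$ in the statement come from. Your route avoids projections entirely: the $\epsilon$-Riesz property gives $\|\sum_{i\in J}a_i\varphi_i\|$ between $(1+\epsilon)^{\mp 1/2}(\sum|a_i|^2)^{1/2}$ directly (you do not even need the double pass through Proposition \ref{p1} that degrades this to $(1+\epsilon)^{\mp 3/2}$), while Lemma \ref{l5} gives $\|S_j^{-1/2}-I\|\le(1+\epsilon)^{1/2}-1\le\epsilon/2$ on each block, whence $\|\Delta\|\le\tfrac{\epsilon}{2}(1+\epsilon)^{1/2}\sqrt{K_1}(\sum_{i\in J}|a_i|^2)^{1/2}$ by the triangle inequality, Cauchy--Schwarz over blocks, and Proposition \ref{p1}. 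The trade-off: the paper's argument is tied to Lemma \ref{lem1} and produces exactly the advertised constants; yours is shorter and more transparent but produces different (better) ones.

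The one place your writeup goes astray is the closing paragraph. You worry that the reverse triangle inequality only yields a leading factor like $(1+\epsilon)^{-3/2}$ rather than $(1-4\epsilon/(1-\epsilon)^2)/(1+\epsilon)^2$, and you propose a ``further refinement via Lemma \ref{lem1}'' to recover the stated constant. No refinement is needed: since $(1+\epsilon)^{-3/2}\ge(1+\epsilon)^{-2}>(1-4\epsilon/(1-\epsilon)^2)/(1+\epsilon)^2$ and $\tfrac{\epsilon}{2}(1+\epsilon)^{1/2}\le 4\epsilon(1+\epsilon)$, your lower bound strictly dominates the stated one, and likewise for the upper bound since $(1+\epsilon)^{1/2}\le(1+\epsilon)^{3/2}$. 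The stated constants are artifacts of the paper's particular decomposition, not sharp targets you must hit; you only need to dominate them, which your estimates already do. So the proof is complete once you carry out the elementary constant tracking you sketched and drop the proposed detour through Lemma \ref{lem1}.
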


\begin{proof}
Choose a subset $J\subset \{1,2,\ldots,M$ with $|J|\le s$ and let $J_j=J\cap I_j$ for all $j=1,2,\ldots,K$.
For each $1\le j \le K_1$ let $P_j$ be the orthogonal projection of $\cH_N$ onto
span $\{S_j^{-1/2}\varphi_i\}_{i\in J_j}$.  Choose any scalars $\{a_i\}_{i\in J_j:j=1,2\ldots,K}$.
Then
\begin{equation}\label{eqn4}
 \|\sum_{j=1}^{K_1}P_j\sum_{i\in J_j}a_iS_j^{-1/2}\varphi_i + \sum_{j=K_1+1}^{K}
\sum_{i\in J_j}a_i\varphi_i\|- \|\sum_{j=1}^{K_1}(I-P_j)\sum_{i\in J_j}a_iS_j^{-1/2}\varphi_i\|
\end{equation}
\[ \le \|\sum_{j=1}^{K_1}\sum_{i\in J_j}a_iS^{-1/2}\varphi_i + \sum_{j=K_1+1}^{K}
\sum_{i\in J_j}a_i\varphi_i\|\]
\[ \le \|\sum_{j=1}^{K_1}P_j\sum_{i\in J_j}a_iS_j^{-1/2}\varphi_i + \sum_{j=K_1+1}^{K}
\sum_{i\in J_j}a_i\varphi_i\|+ \|\sum_{j=1}^{K_1}(I-P_j)\sum_{i\in J_j}a_iS_j^{-1/2}\varphi_i\|
\]
We will consider the above two sums separately.  By Lemma \ref{l5} we have
\[ (I-S_j^{-1/2})^2 \le \left ( 1-\frac{1}{\sqrt{1+\epsilon}}\right )^2I\le \frac{\epsilon}{1+\epsilon}I.\]
Applying Lemma \ref{l5}
and using $T=S^{-1/2}$ in Lemma \ref{lem1} we have for all $j=1,2,\ldots,K_1$
\begin{eqnarray*}
 \|(I-P_j)\sum_{i\in J_j}a_iS_j^{-1/2}\varphi_i\| &\le& 
\frac{4\frac{\epsilon}{1+\epsilon}}{(1-\frac{\epsilon}{1+\epsilon})^2} 
\left ( \sum_{i\in J_j}|a_i|^2 \right )^{1/2}\\
&=& 4\epsilon(1+\epsilon)
\left ( \sum_{i\in J_j}|a_i|^2 \right )^{1/2}.
\end{eqnarray*}
Hence,
\begin{eqnarray}\label{eqn1}
 \|\sum_{j=1}^{K_1}(I-P_j)\sum_{i\in J_j}a_iS_j^{-1/2}\varphi_i\| 
 &\le&
 \sum_{j=1}^{K_1}   \|(I-P_j)\sum_{i\in J_j}a_iS_j^{-1/2}\varphi_i\| \\
 \nonumber
 &\le& 4\epsilon(1+\epsilon)\sum_{j=1}^{K_1}\left ( \sum_{i\in J_j}|a_i|^2 \right )^{1/2}\\
 \nonumber
 &\le& 4\epsilon(1+\epsilon)\sqrt{K_1}\left ( \sum_{j=1}^{K_1}\sum_{i\in J_j}|a_i|^2\right )^{1/2}
\end{eqnarray}
For the second term, since the vector
\[ \sum_{j=1}^{K_1}P_j\sum_{i\in J_j}a_iS_j^{-1/2}\varphi_i + \sum_{j=K_1+1}^{K}
\sum_{i\in J_j}a_i\varphi_i,\]
is contained in the span of the vectors $\{\varphi_i\}_{i\in J_j:j=1,2,\ldots,K}$ and
\[ \sum_{j=1}^K|J_j|=|J| \le s,\]
which is an $\epsilon$-Riesz sequence, we have by Proposition \ref{p1}
\[\frac{1}{(1+\epsilon)^2}\left [ \sum_{j=1}^{K_1}\|P_j\sum_{i\in J_j}a_iS_j^{-1/2}\varphi_i\|^2
+ \sum_{j=K_1+1}^K\|\sum_{i\in J_j}a_i\varphi_i\|^2\right ]\]
\[ \le \|\sum_{j=1}^{K_1}P_j\sum_{i\in J_j}a_iS_j^{-1/2}\varphi_i + \sum_{j=K_1+1}^{K}
\sum_{i\in J_j}a_i\varphi_i\|^2\]
\[
\le (1+\epsilon)^2 \left [ \sum_{j=1}^{K_1}\|P_j\sum_{i\in J_j}a_iS_j^{-1/2}\varphi_i\|^2
+ \sum_{j=K_1+1}^K\|\sum_{i\in J_j}a_i\varphi_i\|^2\right ]\]
Since $\{S_j^{-1/2}\varphi_i\}_{i\in J_j}$ is an orthonormal set, we have
\begin{equation}\label{eqn5}
\sum_{j=1}^{K_1}\|P_j\sum_{i\in J_j}a_iS_j^{-1/2}\varphi_i\|^2
+ \sum_{j=K_1+1}^K\|\sum_{i\in J_j}a_i\varphi_i\|^2
\end{equation}
\[ \le \sum_{j=1}^{K_1}\|\sum_{i\in J_j}a_iS_j^{-1/2}\varphi_i\|^2 + 
(1+\epsilon)\sum_{j=K_1+1}^K\sum_{i\in J_j}|a_i|^2\]
\[ = \sum_{j=1}^{K_1}\sum_{i\in J_j}|a_i|^2 +(1+\epsilon)\sum_{j=K_1+1}^K\sum_{i\in J_j}|a_i|^2\]
\[ \le (1+\epsilon)\sum_{i\in J}|a_i|^2.\]
Similarly, applying the {\it hence} from Lemma \ref{lem1} we have
\begin{equation}\label{eqn2}
 \sum_{j=1}^{K_1}\|P_j\sum_{i\in J_j}a_iS_j^{-1/2}\varphi_i\|^2
+ \sum_{j=K_1+1}^K\|\sum_{i\in J_j}a_i\varphi_i\|^2
\end{equation}
\[ \ge(1-4\epsilon/(1-\epsilon)^2)
 \sum_{j=1}^{K_1}\|\sum_{i\in J_j}a_iS_j^{-1/2}\varphi_i\|^2 + 
\frac{1}{(1+\epsilon)}\sum_{j=K_1+1}^K\sum_{i\in J_j}|a_i|^2\]
\[ = (1-4\epsilon/(1-\epsilon)^2)\sum_{j=1}^{K_1}\sum_{i\in J_j}|a_i|^2 +\frac{1}{(1+\epsilon)}
\sum_{j=K_1+1}^K\sum_{i\in J_j}|a_i|^2\]
\[ \ge (1-4\epsilon/(1-\epsilon)^2)\sum_{i\in J}|a_i|^2.\]
Putting this second part together we have
\[  \|\sum_{j=1}^{K_1}P_j\sum_{i\in J_j}a_iS_j^{-1/2}\varphi_i + \sum_{j=K_1+1}^{K}
\sum_{i\in J_j}a_i\varphi_i\|^2\]
\[ \le (1+\epsilon)^2 \left [ \sum_{j=1}^{K_1}\|P_j\sum_{i\in J_j}a_iS_j^{-1/2}\varphi_i\|^2
+ \sum_{j=K_1+1}^K\|\sum_{i\in J_j}a_i\varphi_i\|^2\right ]\]
\[ \le (1+\epsilon)^3 \sum_{i\in J}|a_i|^2.\]
Similarly,
\begin{equation}\label{eqn3}
 \|\sum_{j=1}^{K_1}P_j\sum_{i\in J_j}a_iS_j^{-1/2}\varphi_i + \sum_{j=K_1+1}^{K}
\sum_{i\in J_j}a_i\varphi_i\|^2
\end{equation}
\[ \ge \frac{1}{(1+\epsilon)^2} \left [ \sum_{j=1}^{K_1}\|P_j\sum_{i\in J_j}a_iS_j^{-1/2}\varphi_i\|^2
+ \sum_{j=K_1+1}^K\|\sum_{i\in J_j}a_i\varphi_i\|^2\right ]\]
And by equation \ref{eqn2} we can continue this inequality to
\[\ge \frac{1-4\epsilon/(1-\epsilon)^2}{(1+\epsilon)^2}\sum_{i\in J}|a_i|^2.\]
Finally, combining equations \ref{eqn4}, \ref{eqn1}, and \ref{eqn5} we have:
\[ \|\sum_{j=1}^{K_1}\sum_{i\in J_j}a_iS^{-1/2}\varphi_i + \sum_{j=K_1+1}^{K}
\sum_{i\in J_j}a_i\varphi_i\|\]
\[ \le \|\sum_{j=1}^{K_1}P_j\sum_{i\in J_j}a_iS_j^{-1/2}\varphi_i + \sum_{j=K_1+1}^{K}
\sum_{i\in J_j}a_i\varphi_i\|+ \|\sum_{j=1}^{K_1}(I-P_j)\sum_{i\in J_j}a_iS_j^{-1/2}\varphi_i\|\]
\[ \le (1+\epsilon)^{3/2} \left ( \sum_{i\in J}|a_i|^2\right )^{1/2} +
4\epsilon(1+\epsilon)\sqrt{K_1}\left ( \sum_{j=1}^{K_1}\sum_{i\in J_j}|a_i|^2 \right )^{1/2}\]
\[ \le \left [ ((1+\epsilon)^{3/2}+4\epsilon(1+\epsilon)\sqrt{K_1}\right ] 
\left ( \sum_{i\in J}|a_i|^2 \right )^{1/2}.\]

Similarly, combining equations \ref{eqn4}, \ref{eqn2} and \ref{eqn3} we have
\[ \|\sum_{j=1}^{K_1}\sum_{i\in J_j}a_iS^{-1/2}\varphi_i + \sum_{j=K_1+1}^{K}
\sum_{i\in J_j}a_i\varphi_i\|\]
\[ \ge  \|\sum_{j=1}^{K_1}P_j\sum_{i\in J_j}a_iS_j^{-1/2}\varphi_i + \sum_{j=K_1+1}^{K}
\sum_{i\in J_j}a_i\varphi_i\|- \|\sum_{j=1}^{K_1}(I-P_j)\sum_{i\in J_j}a_iS_j^{-1/2}\varphi_i\|\]
\[ \ge \frac{1-4\epsilon/(1-\epsilon)^2}{(1+\epsilon)^2}\left (\sum_{i\in J}|a_i|^2 \right )^{1/2}
- 4\epsilon(1+\epsilon)\sqrt{K_1}\left ( \sum_{j=1}^{K_1}\sum_{i\in J_j}|a_i|^2 
\right )^{1/2} \]
\[ \ge \left [ \frac{1-4\epsilon/(1-\epsilon)^2}{(1+\epsilon)^2} - 4\epsilon(1+\epsilon)\sqrt{K_1}\right ] 
\left ( \sum_{i\in J}|a_i|^2 \right )^{1/2}.\]
\end{proof}

So we can maintain the restricted isometry property after replacement of some
$K_1$ groups of $s$ vectors in the RIP family by orthonormal sets as long as
\[ 0<  \left [ \frac{1-4\epsilon/(1-\epsilon)^2}{(1+\epsilon)^2} - 4\epsilon(1+\epsilon)\sqrt{K_1}\right ] \]
Solving for $K_1$ we have
$$
   K_1 < \frac{1}{16 \epsilon^2}\frac{(1-4\epsilon/(1-\epsilon)^2)^2}{(1+\epsilon)^6}.
$$
So for sufficiently small $\epsilon$, the fraction on the right hand side is close to one
and we can let $K_1$ grow like $1/\epsilon^2$.

\end{document}